\def\centerarc[#1](#2)(#3:#4:#5)
\tikzset{
  pics/carc/.style args={#1:#2:#3}{
    code={
      \draw[pic actions] (#1:#3) arc(#1:#2:#3);
    }
  }
}
\tikzset{mylabel/.style  args={at #1 #2  with #3}{
    postaction={decorate,
    decoration={
      markings,
      mark= at position #1
      with  \node [#2] {#3};
 } } } }
\def\br#1\er{\textcolor{red}{#1}} 
\author[1]{José Luis Flores}
\author[2]{J\'onatan Herrera}
\author[3]{Didier A. Solis}
\affil[1]{Universidad de Málaga}
\affil[2]{Universidad de Córdoba}
\affil[3]{Universidad Autónoma de Yucatán}
\title{Low regularity approach to Bartnik's conjecture}
\newtheorem{thm}{Theorem}[section]
\newtheorem{theorem}{Theorem}[section]
\newtheorem{proposition}[thm]{Proposition} 
\newtheorem{corollary}[thm]{Corollary} 
\theoremstyle{definition} \newtheorem{definition}[thm]{Definition}
\newtheorem{remark}[thm]{Remark}
\begin{document}
\maketitle
\begin{abstract}
  In this work we establish a version of the Bartnik Splitting Conjecture in the context of Lorentzian length spaces. {In precise terms, we show that under an appropriate timelike completeness condition, a globally hyperbolic Lorentzian length space of the form $\Sigma\times \mathbb{R}$ with $\Sigma$ compact splits as a metric Lorentzian product, provided it has non negative timelike curvature bounds.} This is achieved by showing that the causal boundary of that Lorentzian length space consists on a single point. 
\end{abstract}

\section{Introduction}

Over the last decade or so there has been a growing interest in developing frameworks tailored to handle problems in general relativity in the absence of a regular --that is, at least $C^2$-- Lorentzian structure. The wealth of tools employed to this effect include the study of rough metrics \cite{CG,Graf,KOSS}, the use of distributional methods \cite{PS}, cone structures \cite{Mingcone} and ordered sets \cite{Surya,Olafdim}.  The broad extent and diversity of these methods may prove valuable in bringing a deeper understanding to singular phenomena such as the provided by recent observations of black holes and gravitational waves \cite{LIGO,M87}.   

Among these alternatives, the so-called synthetic methods have proven very successful in extending classical results to non-smooth contexts. Motivated by the well established theory of metric length spaces \cite{Burago}, several different approaches are currently being pursued. For instance, the theory of Lorentzian (pre)-length spaces, first studied by Kunzinger and S\"amman 
\cite{KunzingerSamann2018} provides a setting in which several different notions of comparison angles  and timelike curvature bounds can be defined \cite{BarreraEtal2022,BeranSamann2023,timelikecurv}, in ways analogous to the ones developed in the contexts of (metric) Alexandrov\textcolor{red}{\footnote{Recall that an \emph{Alexandrov space}  is a locally compact, complete, and path-connected length space $(X, d)$ with curvature bounded below by $k$.}}  and CAT spaces \cite{AKP,Bridson}. In this setting, landmark results as the causal hierarchy \cite{ACS,causalgluing}, Bonnet theorem \cite{Bonnet} and Toponogov splitting theorem \cite{BeranEtal2023} have been obtained. Moreover, the related notion of almost Lorentzian pre-length space has been useful in studying Lorentzian structures from a functorial point of view \cite{Olaf}. On the other hand, the notion of Lorentzian metric space was introduced in \cite{MS} in order to attain convergence results in the low regularity setting. slight variation of this concept, closer to the one of pre-length space, has led to a description of the causal boundary in the rough setting \cite{BFH} . It is worthwhile noting that synthetic methods in Lorentzian geometry have also paved the way to explore diverse geometrical objects, such as contact structures \cite{Hedicke} and hyperspaces \cite{BMS2}. 

One of the famous standing problems in mathematical relativity is the so called \textit{Bartnik Splitting Conjecture} (BSC). Originally posed in \cite{Bartnik}, it can be interpreted as the rigidity associated to the generic condition in the Hawking-Penrose singularity theorem. In precise terms it reads as follows:

\medskip

\noindent\textbf{Conjecture} Let $(M,g)$ be a spacetime that satisfies
\begin{enumerate}
\item $(M,g)$ is globally hyperbolic with compact Cauchy surfaces.
\item $(M,g)$ satisfies the Strong Energy Condition: $Ric (X,X)\ge 0$ for all timelike $X$.
\end{enumerate}
If $(M,g)$ is timelike geodesically complete then $(M,g)$ splits isometrically as $(N\times\mathbb{R}, h-dt^{2})$, where $(N,h)$ is a compact Riemannian manifold.

\medskip

As pointed out by Bartnik himself, the BSC is true provided there exists a (necessarily compact) Cauchy surface with constant mean curvature.\footnote{In fact, due to Hawking's Singularity Theorem, such Cauchy hypersurface must have vanishing mean curvature.}  Subsequently, different additional hypotheses have been considered in order to guarantee the existence of such a hypersurface. In this sense,
recall that an \textit{observer (particle) horizon} is a set of the form $\partial I^-(\gamma )$ ($\partial^+(\gamma )$) where $\gamma$ is a future (past) inextendible timelike curve. 
The non existence of such horizons, i.e. the condition $I^+(\gamma )=I^-(\gamma)=M$, is equivalent to the compactness of the sets $\partial I^\pm (p)$, $p\in M$ \cite{Gal84}. Under de hypotheses of the BSC, this condition, which is commonley referred to in the literature as the {\em no horizon condition}, guarantees the existence of some CMC Cauhy hypersurface passing through $p$. So, under the non-positive sectional curvature condition $K\le 0$, which implies both, 
the Strong Energy Condition and the no horizon condition, the BSC follows.

An alternative proof for the BSC circumventing the use of CMC hypersurfaces can be pursued using the Lorentzian Splitting Theorem \cite{Esch88}. Indeed, the compactness of the Cauchy surface enables us through standard methods to assert the existence of a \textit{causal} line, which in the presence of the sectional curvature bounds can be shown to be timelike, and the mentioned splitting result does the rest \cite{CSHF22}. 

The aim of this work is to find an appropriate synthetic version of BSC. Note however that the notion of hypersurface has not been well established in the synthetic scenario, nor an extrinsic theory. Consequently, no adequate definition of CMC exists for Lorentzian length spaces. This joined to the recent proof of the synthetic version of the Lorentzian Splitting Theorem \cite{BeranEtal2023} makes the second approach described above a more suited framework than the first one to study the BSC. In this setting, a few remarks are in place. 

First, we note that in sharp contrast to the smooth setting \cite{Geroch}, a globally hyperbolic (pre-)length space need not split topologically as a product \cite{BH}, thus we need to restrict our attention to product spaces. Further, for Lorentzian (pre-)length spaces there is no clear choice of a family of preferred parameters --in the spirit of affine parameters-- for the class of timelike maximal curves in general, thus there is no natural notion of completeness in this context. Alternatively, we can establish a general formulation in terms of asympotic properties of the Lorentzian distance function.

In regards to the curvature condition, we rely on the synthetic notion first introduced in \cite{KunzingerSamann2018} and further developed in \cite{BarreraEtal2022,timelikecurv,BeranSamann2023}. This in turn is based upon the work of  Alexander and Bishop \cite{AB} on curvature bounds for semi-Riemannian manifolds. Notice that due to the use of signed distances in this seminal study, \textit{non-positive} sectional curvature for timelike planes corresponds to \textit{non-negative} timelike curvature bounds.

Finally, the no horizon condition is closely related to the \textit{causal boundary}, --or \textit{c-boundary} for short-- first introduced in \cite{GKP}. Intuitively, the c-boundary is made of the ideal points associated to families of future or past inextensible timelike curves. The connection between these concepts is that the no horizon condition holds if and only if the future (past) causal boundary consists of a singleton. Since the c-completion of a Lorentzian pre-length space has been recently explored \cite{ABS,BFH}, the heart of our approach consists of showing that, under the sectional curvature bound, the c-boundary is made of only one point.

Our main result is the following:

\begin{theorem}
  \label{thm:main:2}
  Let $(X,d,\ll,\leq,\tau)$ be a connected regularly localisable, globally hyperbolic Lorentzian length space with proper metric $d$ and global non-negative timelike curvature satisfying timelike geodesic prolongation. Assume that $X=\Sigma\times \mathbb{R}$ with $\Sigma$ compact, {and that the vertical curves $t\mapsto (x,t)$  are timelike complete with $\tau((x,t),(x,s))>0$ for any $t,s\in \mathbb{R}$.} Then, there exists a $(\tau,\leq)$-preserving homeomorphism $f:S\times \mathbb{R}\rightarrow X$, where $S$ is a proper, strictly intrinsic metric of Alexandrov curvature $\geq 0$.
\end{theorem}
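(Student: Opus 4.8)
The plan is to reduce the statement to the synthetic Lorentzian Splitting Theorem of \cite{BeranEtal2023}: once we exhibit a \emph{complete timelike line} in $X$, that theorem delivers the desired $(\tau,\leq)$-preserving homeomorphism $f\colon S\times\mathbb{R}\to X$ with $S$ proper, strictly intrinsic and of Alexandrov curvature $\geq 0$, because the remaining hypotheses (global hyperbolicity, regular localisability, timelike geodesic prolongation, and global non-negative timelike curvature) are assumed outright. Thus essentially all the work lies in producing the line, and the abstract's assertion that the argument passes through the causal boundary reflects the fact that the existence of such a line is encoded in the c-boundary reducing to a single point.

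First I would fix $x\in\Sigma$ and exploit the vertical curve $t\mapsto(x,t)$, which by hypothesis is timelike complete with $\tau((x,-n),(x,n))>0$ diverging as $n\to\infty$ (timelike completeness forces the total $\tau$-length to diverge). For each $n$, global hyperbolicity yields a maximizing causal segment $\gamma_n$ from $(x,-n)$ to $(x,n)$. To extract a limit I need the $\gamma_n$ to accumulate in a compact set: using global hyperbolicity I would fix a compact Cauchy surface $K$ (the Cauchy surfaces being homeomorphic to $\Sigma$, hence compact), so that each $\gamma_n$ meets $K$ at a point $r_n$, and after passing to a subsequence $r_n\to r\in K$. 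Reparametrizing so that $\gamma_n(0)=r_n$ and applying the limit curve theorem available in regularly localisable, globally hyperbolic Lorentzian length spaces, I would obtain a limit curve $\gamma$ through $r$ that is inextendible to both sides and, by upper semicontinuity of $\tau$ along maximizers, globally maximizing — that is, a causal line.

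The hard part will be upgrading this causal line to a \emph{timelike} line and, correspondingly, showing that the c-boundary collapses to a point. For timelikeness I would argue that, since $\tau((x,-n),(x,n))\to\infty$ splits across the crossing point $r_n$ into two halves that each diverge, the limiting line satisfies $\tau(\gamma(0),\gamma(t))\to\infty$ as $t\to\pm\infty$; a null line would have $\tau$ vanishing identically between its points, so $\gamma$ must be timelike. Here the genuine obstacle — and the place where the curvature bound is indispensable — is controlling the asymptotics so that no observer horizon forms: I expect to show that global non-negative timelike curvature together with compactness of $\Sigma$ forces $\partial I^\pm(p)$ to be compact, equivalently that the future and past c-boundaries each reduce to a single ideal point. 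This is the synthetic analogue of the no-horizon condition, and it is exactly what guarantees both that the extracted line remains maximizing all the way to its ideal endpoints and that it is timelike rather than null.

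Finally, with a complete timelike line in hand I would invoke \cite{BeranEtal2023} to obtain the metric Lorentzian product splitting $X\cong S\times\mathbb{R}$, where $S$ is the intrinsic Cauchy slice carrying Alexandrov curvature $\geq 0$, with properness inherited from properness of $d$ and compactness of $\Sigma$; the resulting homeomorphism is $(\tau,\leq)$-preserving by the very construction of the splitting. The principal difficulty throughout is the third step: in the low-regularity setting the smooth tools (focal points, mean-curvature comparison) are unavailable, so the collapse of the c-boundary must be established synthetically, directly from the curvature bound and the compactness and completeness hypotheses.
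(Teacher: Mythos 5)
Your skeleton matches the paper's proof: reduce to the synthetic splitting theorem (Theorem~\ref{thm:splitting}), extract a causal line as a limit of maximizers joining $(p,-n)$ to $(p,n)$ anchored on the compact slice $\Sigma\times\{0\}$, and upgrade it to a timelike line by showing the future causal boundary collapses to a single point. However, the decisive step --- that the non-negative timelike curvature bound forces this collapse --- is exactly what you leave as ``I expect to show,'' and it is where essentially all of the paper's mathematical content lies. The paper proves it as Theorem~\ref{thm:main:3}: assuming a vertical line $\delta(s)=(a,s)$ is not contained in $I^-(\gamma)$ for a nearby vertical line $\gamma(s)=(b,s)$, one constructs points $A_i=(a,s_i)$ on $\delta$ with $s_i$ convergent (so $\tau(A_1,A_i)$ stays bounded) and $0<\tau(A_i,B_i)<1/i$, while the $B_i$ on $\gamma$ satisfy $\tau(A_1,B_i)>i-1$ by timelike completeness; a limit-curve argument plus continuity of angles \cite{BeranSamann2023} shows the angles $\alpha_i=\measuredangle_{A_1}(\delta_i,\lambda_i)$ converge, the curvature bound via Proposition~\ref{thm:comparationangles} gives $\bar{\alpha}_i\leq\alpha_i$ bounded, and the Minkowski law of cosines \eqref{eq:6} applied to the comparison triangles, divided by $\tau(A_1,B_i)$, yields a contradiction (the left-hand side tends to $0$ while the right-hand side diverges). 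A chaining argument along a compact path in $\Sigma$ removes the ``nearby'' restriction, Proposition~\ref{thm:main:1} plus compactness of $\Sigma$ then makes the future boundary a singleton, and Corollary~\ref{coro:nonull} (resting on Proposition~\ref{prop:main:1}) excludes inextensible null lines. Without some argument of this kind your proposal reduces to quoting the splitting theorem.

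Two of your intermediate claims would also fail as stated. First, divergence of $\tau((x,-n),(x,n))$ does not imply that both halves $\tau((x,-n),r_n)$ and $\tau(r_n,(x,n))$ diverge (the length can accumulate entirely on one side), and even if it did, it gives no lower bound on $\tau(\gamma(s),\gamma(t))$ for the limit curve on a fixed parameter window: the limit of timelike maximizers can perfectly well be a null line, which is precisely why the paper obtains timelikeness from the one-point boundary via Corollary~\ref{coro:nonull} rather than from a length estimate along the approximating segments. Second, you cannot invoke ``a compact Cauchy surface homeomorphic to $\Sigma$'': no Cauchy surface theory is available in this synthetic setting, and the paper explicitly notes (citing \cite{BH}) that globally hyperbolic Lorentzian length spaces need not split topologically --- the product structure $X=\Sigma\times\mathbb{R}$ is a hypothesis, not a consequence. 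The correct fix is the one the paper uses: each maximizer $\gamma_n$ crosses the compact slice $\Sigma\times\{0\}$ because its $\mathbb{R}$-coordinate runs continuously from $-n$ to $n$, and this anchors the application of the limit curve theorem \cite{BeranEtal2023}.
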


The paper is organized as follows. In section \ref{sec:preliminaries} we state the main tools and establish the notation that will be used throughout this work. In section \ref{sec:cbdry} we analyze the structure of the causal boundary. Finally, in section \ref{sec:main} we give the proof of Theorem \ref{thm:main:2}.

\section{Preliminaries}\label{sec:preliminaries}

\subsection{Lorentzian metric spaces and its c-completion}

In this section we will introduce the definitions and results that we will require along the paper. We begin with the notion of Lorentzian metric space.

\begin{definition}
  \label{def:main:1}
  A \emph{Lorentzian metric space} is a triple $(X,\sigma,\tau)$ where $(X,\sigma)$ is a topological set, and $\tau:X\times X\rightarrow [0,\infty]$ is a function satisfying:

  \begin{enumerate}[label=(\roman*)]
  \item $\tau$ is lower-semicontinuous with respect to $\sigma$;
    
  \item the reverse triangle inequality holds, i.e.
    \begin{equation}
      \label{eq:1}
      \tau(x,z)\geq \tau(x,y)+\tau(y,z),\qquad \hbox{if $\;\tau(x,y),\;\,\tau(y,z)>0$.}
    \end{equation}
  \end{enumerate}
\end{definition}
Given a Lorentzian metric space $(X,\sigma,\tau)$, we can define a \emph{chronological relation} in the following way: two points $x,y\in X$ are chronologically related $x\ll y$ if and only if $\tau(x,y)>0$. Now, we can introduce some basic elements from causality theory. A {\em chronological chain} is a sequence of points $\{x_n\}\subset X$ such that $x_n\ll x_{n+1}$ for all $n$. For a given set $S\subset X$ we can define $I^-(S):=\left\{ y\in X: \exists\; x\in S \hbox{ with } y\ll x \right\}$. We will say that a set $S$ is a {\em past set} if $I^-(S)=S$. An {\em indecomposable past set}, or {\em IP}, is a past set that cannot be decomposed into two proper past sets. We will say that an IP $P$ is a {\em proper IP}, or {\em PIP} for short, if there exists a point $x\in X$ so $P=I^{-}(x)$. The rest of IPs will be called {\em terminal IPs,} or {\em TIPs} for short. The set of all IPs will be called the {\em future causal completion}, and it will be denoted by $\hat{X}$. {The set $\hat{\partial}X:=\hat{X}-X$ is the \textit{ future causal boundary.}}  In analogy, the notions for future sets, IFs, etc. are given by time-reversion. We will name \emph{past chronological completion}, denoted $\check{X}$, to the set of all IFs {and $\check{\partial}X:=\check{X}-X$ the \textit{past causal boundary}}.

Between the elements of $\hat{X}_{\emptyset}:=\hat{X}\cup \left\{\emptyset \right\}$ and $\check{X}_{\emptyset}:=\check{X}\cup \left\{ \emptyset  \right\}$, it is possible to define the so-called $S$-{\em relation} (due to Szabados), given in the following way: an IP and an IF are $S$-related if

\begin{eqnarray}
  \label{eq:2}
  F\subset \uparrow P:=\left\{ y\in X: x\ll y \hbox{ for all $x\in P$} \right\}\hbox{, and $F$ is a maximal IF into $\uparrow P$.}\\
  P\subset \downarrow F:=\left\{ y\in X: y\ll x \hbox{ for all $x\in P$} \right\}\hbox{, and $P$ is a maximal IP into $\downarrow F$.}
\end{eqnarray}
If for a given IP $P$ there is no $F\neq \emptyset$ $S$-related to it, we will say that $P\sim_S\emptyset$ (an analogous definition follows for $\emptyset\sim_S F$).

\medskip

Let us introduce now some mild conditions for a Lorentzian metric space (see \cite{BFH}), which will become useful later.

\begin{definition}
  \label{def:main:2}
  A Lorentzian metric space $(X,\sigma,\tau)$ is:
  \begin{enumerate}[label=(\roman*)]
  \item \emph{chronologically dense} if for every point $x\in X$ with $I^-(x)\neq \emptyset$ (resp. $I^+(x)\neq \emptyset$) there exists a future (resp. past) chronological chain $\left\{ x_n \right\}$ with topological limit $x$;
  \item \emph{distinguishing} if 
    $I^-(x)=I^-(y)$ implies $x=y$ (\emph{past-distinguising}) and $I^+(x)=I^+(y)$ implies $x=y$ (\emph{future-distinguising}).
  \item \emph{strongly causal} if it is distinguishing and the space topology $\sigma$ agrees with the \emph{Alexandrov topology}, i.e. the topology generated by the subbasis of chronological diamons $I(x,y):=I^+(x)\cap I^-(y) $ for all $x,y\in X$.
  \end{enumerate}
\end{definition}

We are now in conditions to introduce the notion of c-completion :
\begin{definition}\cite[Sec. 5.2]{BFH}
  \label{def:main:3}
  The {\em c-completion} of a  
  chronologically dense and strongly causal Lorentzian metric space
  $(X,\sigma,\tau)$ is another Lorentzian metric space $(\overline{X},\overline{\sigma},\overline{\tau})$, where:
  \begin{itemize}
  \item $\overline{X}:=\left\{ (P,F)\in \hat{X}_{\emptyset}\times \check{X}_{\emptyset}:P\sim_S F  \right\}$
  \item $\overline{\sigma}$ is the {\em chronological topology}, defined as the sequential topology associated to the limit operator $L$; i.e., a subset $C$ is {\em closed} for $\sigma_{chr}$ if and only if ${L}(\sigma) \subset C$ for any sequence $\sigma \subset C$, where $$(P,F) \in L(\sigma) \Leftrightarrow P \in \hat{L}(P_n) \text{ if } P\neq \emptyset, \text{ and } F \in \check{L}(F_n) \text{ if } F \neq \emptyset,$$
    with
    \begin{align*}
      P\in \hat{L}(P_n) &\Leftrightarrow \left\lbrace \begin{array}{c}
        P\subset LI (P_n) \\
        P \text{ is a maximal IP into } LS(P_n),
      \end{array} \right. \\
      F\in \check{L}(F_n) &\Leftrightarrow \left\lbrace \begin{array}{c}
        F\subset LI (F_n) \\
        F \text{ is a maximal IF into } LS (F_n),
      \end{array} \right.
    \end{align*}
    and being $LI$ and $LS$ the usual inferior and superior limit operators in set theory;
  \item $\overline{\tau}((P,F),(P',F')):=\lim_{n\rightarrow \infty}\tau(y_n,x'_n)$ for any past-directed chain $\{y_n\}$ generating $F$, and any future-directed chain $\{x'_n\}$ generating $P'$ (if either $F=\emptyset$ or $P'=\emptyset$, $\overline{\tau}((P,F),(P',F'))=0$). 
  \end{itemize}
\end{definition}
With this definition, the following properties hold:
\begin{enumerate}[label=(\roman*)]
\item $X$ is topologically embedded  and dense in the c-completion $\overline{X}$.
\item $\overline{\tau}$ coincides with $\tau$ when it is evaluated on points of $X$.
\item $\overline{X}$ is {\em chronologically complete} in the sense that any chronological chain in $\overline{X}$ has a topological endpoint. 
\end{enumerate}

Our main result (Theorem \ref{thm:main:2}) involves \emph{globally hyperbolic} Lorentzian metric spaces. In the literature there are several ways to define the notion of global hyperbolicity. Here, we will consider the one given by the compactness of the causal diamons, which requires a causal relation on $(X,\sigma,\tau)$. More precisely:

\begin{definition}
  \label{def:main:13}
  Let $(X,\sigma,\tau)$ be a Lorentzian metric space, and let $\leq$ be a pre-order relation defined on $X$. We say that $\leq$ is \emph{compatible with} $X$ if:

  \begin{enumerate}[label=(\roman*)]
  \item $x\leq y$ whenever $\tau(x,y)>0$, and
  \item \label{compatible2} $\tau(x,z)\geq \tau(x,y)+\tau(y,z)$ whenever $x\leq y\leq z$.
  \end{enumerate}
\end{definition}
\noindent Note that condition \ref{compatible2} ensures that the triangle inequality, valid for chronologically related triples, can be extended now to causally related ones. Moreover, this condition implies the so-called \emph{push-up property} (see \cite[Proposition 3.6]{BFH}); i.e. for any $x,y,z\in X$ satisfying $x\leq y\ll z$ or $x\ll y\leq z$, necessarily $x\ll z$. On the other hand,
with a causal relation in place we can define the  \emph{causal} past of a set $S$ as $J^{-}(S)=\left\{y\in X: \exists\; x\in S \hbox{ with }y\leq x  \right\}$, being the definition for future analogous.
Now, we can give the following definitions:

\begin{definition}
  \label{def:main:14}
  A Lorentzian metric space $(X,\sigma,\tau)$ joined to a compatible causal relation $\leq$ is said
  \begin{enumerate}[label=(\roman*)]
  \item {\em (globally) causally closed} if the causal relation $\leq$ is closed, i.e., if given two convergent sequences $x_n\rightarrow x$  and $y_n\rightarrow y$ with $x_n\leq y_n$ necessarily $x\leq y$; 
  \item {\em causally simple} if it is strongly causal and  $\overline{I^{\pm}(x)}=J^{\pm}(x)$ for any $x\in X$;
  \item {\em globally hyperbolic} if it is strongly causal and the set $J(x,y):=J^+(x)\cap J^-(y)$ is compact for any $x,y\in X$.
  \end{enumerate}
\end{definition}
\begin{remark} It is clear from previous definitions that global hyperbolicity implies causal simplicity and causal closedness {provided the sets $I^\pm (x)$ are not empty.}
\end{remark}

\subsection{Lorentzian length spaces}
In this section we will complete the structure introduced in the previous one. Most of the notions and results of this section are taken from \cite{BeranSamann2023}. Our first required definition is the concept of Lorentzian pre-length space:

\begin{definition}
  \label{def:main:4}
  A Lorentzian pre-length space is a {causal space} $(X,\ll,\leq)$ together with a metric $d$ on $X$ and a map $\tau:X\times X\rightarrow [0,\infty]$ satisfying:
  \begin{enumerate}[label=(\roman*)]
  \item $\tau$ is lower semi-continuous with respect to $d$,
  \item $\tau(p,r)\geq \tau(p,q)+\tau(q,r)$ for any $p\leq q\leq r$ and
  \item $\tau(p,q)>0$ iff $p\ll q$.
  \end{enumerate}
\end{definition}
\noindent In summary, a Lorentzian pre-length space is a Lorentzian metric space with a compatible causal relation and a metrizable topology $\sigma$ associated to $d$. 

{An example of Lorentzian (pre-)length space that will be most relevant in this work is the \textit{Lorentzian metric product}. Given the metric space $(\Sigma, d)$, define on the cartesian product $X = \Sigma\times \mathbb{R}$ the chronological and causal relations as
  \begin{eqnarray*}
    (x, s)\ll (y, t) &\Leftrightarrow& t-s>d(x, y),\\
    (x, s)\leq (y, t) &\Leftrightarrow& t-s\geq d(x, y),
  \end{eqnarray*}
  as well as a (metric) distance and Lorentzian distance defined by 
  \begin{eqnarray*}
    D((x, s),(y, t)) &= &\sqrt{ \vert t-s\vert^2 + d(x,y)^2},\\
    \tau ((x, s),(y, t)) &= &
                              \begin{cases}
                                \sqrt{ (t-s)^2 - d(x,y)^2 }& \hbox{if}\; (x, s)\le (y, t)\\  0 & \textrm{otherwise}.
                              \end{cases} 
  \end{eqnarray*}
}

In the context of (pre-)length spaces we can define the notion of timelike, causal and null curves as follows:

\begin{definition}
  \label{def:main:5}
  Let $(X,\ll,\leq,d,\tau)$ be a Lorentzian pre-length space. A non-constant locally Lipschitz continuous curve $\gamma:I\rightarrow X$ is \emph{future-directed timelike (resp. causal)} if $\gamma(t_1)\ll \gamma(t_2)$ (resp. $\gamma(t_1)\leq \gamma(t_2)$) for any $t_1<t_2$. A future-directed causal curve is \emph{null} if, in addition, $\gamma(t_1)\not\ll\gamma(t_2)$ for any $t_1<t_2$.
\end{definition}
With these concepts at hand, our aim now is to define the notion of timelike and causal geodesic. In this goal, first we need to define a notion of length in Lorentzian pre-length spaces. For this, we follow an analogous approach to the Riemannian case:
\begin{definition}
  \label{def:main:7}
  Let $(X,\ll,\leq,d,\tau)$ be a Lorentzian pre-length space and consider  a future-directed causal curve $\gamma$. We define the \emph{length} of $\gamma$, denoted by $L_{\tau}(\gamma)$, as follows:
  \[L_{\tau}(\gamma)=\mathrm{Inf}\left\{ \sum_i \tau(\gamma(t_i),\gamma(t_{i+1})):\; (t_i) \hbox{ is a partition of $I$} \right\}.\]
\end{definition}
Now we are in conditions to define the notion of geodesic:
\begin{definition}
  \label{def:main:6}
  Let $(X,\ll,\leq,d,\tau)$ be a Lorentzian pre-length space. A future-directed timelike (or causal) curve $\gamma:I\rightarrow X$ is a \emph{geodesic} if for each $t\in I$ there exists a neighbourhood $[a,b]$ of $t$ (with $a<t<b$ but allowing equality at the endpoints of $I$) such that $\gamma|_{[a,b]}$ is a \emph{distance realizer}, that is, $\tau(\gamma(a),\gamma(b))=L_{\tau}(\gamma|_{[a,b]})$. {A timelike (or causal) \textit{line} is an inextensible timelike curve that realizes distance among any pair of its points. An inextensible timelike (or causal) curve is said to be \textit{complete} if $L_\tau (\gamma )=\infty$.}
\end{definition}

\begin{definition}
  \label{def:main:15}
  A Lorentzian pre-length space $(X,\ll,\leq,d,\tau)$ is {\textit{causally geodesically connected} if for each pair $x,y\in X$ with $x\leq y$, there exists a geodesic $\gamma$ joining them.} Further, if for all $x\ll y$, there exists a timelike geodesic joining them, then the Lorentzian length space is  timelike geodesically connected. 
\end{definition}

Finally, let us end this section by giving the concept of Lorentzian length space. This notion requires to introduce first the following definition, which formalizes the idea of having the Lorentzian pre-length space good local properties:

\begin{definition}\cite[Defn. 2.15]{BeranEtal2023}
  \label{def:Escrituradelteorema:1}
  A Lorentzian pre-length space $(X,d,\ll,\leq,\tau)$ is called \emph{localisable} if for each $x\in X$ there exists a \emph{localising neighbourhood} $\Omega_x$ with the following properties:
  \begin{enumerate}[label=(\roman*)]
  \item the causal curves in $\Omega_x$ have uniformly bounded $d$-length, i.e., $\Omega_x$ is $d-$compatible;
  \item for each $y\in \Omega_x, I^{\pm}(y)\cap\Omega_x\neq \emptyset$;
  \item there is a so-called \emph{local time separation} $\omega_x:\Omega_x\times \Omega_x\rightarrow [0,\infty)$ such that $\Omega_x$ is a Lorentzian pre-length space upon restricting $d,\ll,\leq$;
  \item for all $p,q\in \Omega_x$ with $p<q$ there exists a causal curve $\gamma_{pq}$ from $p$ to $q$ entirely contained in $\Omega_x$ with maximal $\tau$-length among all causal curves from $p$ to $q$ contained in $\Omega_x$, as well as $L_{\tau}(\gamma_{pq})=\omega_x(p,q)$.
  \end{enumerate}
  If, in addition, for any $p,q\in \Omega_x$ with $p\ll q$ the curve $\gamma_{pq}$ is timelike and strictly longer than any causal curve from $p$ to $q$ in $\Omega_x$ containing a null segment, then $\Omega_x$ is called a \emph{regular localising neighbourhood}. If any point of $X$ has a regular localising neighbourhood then $X$ is said \emph{regularly localisable}.
\end{definition}
Previous notion also motivates the following definition (which will be used later):
\begin{definition}\cite[Defn. 2.19]{BeranEtal2023}
  \label{def:Escrituradelteorema:2}
  A localisable Lorentzian pre-length space $X$ is said to have the \emph{timelike geodesic prolongation} property if any maximising timelike segment $\gamma:[a,b]\rightarrow X$ can be extended to a timelike geodesic defined on an open domain, i.e., there is $\epsilon>0$ and a timelike geodesic $\tilde{\gamma}:(a-\epsilon,b+\epsilon)\rightarrow X$ such that $\tilde{\gamma}|_{[a,b]}=\gamma$.
\end{definition}
We are now in conditions to give the definition of Lorentzian length space:

\begin{definition}\cite[Defn. 2.24]{BeranEtal2023}
  \label{def:Escrituradelteorema:4}
  A Lorentzian pre-length space $(X,d,\ll,\leq,\tau)$ which is locally causally closed, causally path-connected and localisable is a \emph{Lorentzian length space} if, for any $x\leq y$ with $x\neq y$,
  \begin{equation}
    \label{eq:1}
    \tau(x,y)= \mathrm{sup}\left\{ L_{\tau}(\gamma):\gamma \hbox{ future-directed causal curve from $x$ to $y$} \right\}.
  \end{equation}

\end{definition}

\subsection{Curvature bounds}
In this section we will introduce the notion of curvature bounded from below and from above in the context of Lorentzian pre-length spaces. These notions are defined by using three model spaces: the ones corresponding to positive, negative and zero curvature. However, here we will be only interested in comparison with the zero curvature model, i.e. Minkowski spacetime.

There are two equivalent ways to formalize the notion of bounded curvature in this context, depending on the use of triangles or angles to make the comparison. Our arguments will require both approaches, so we will introduce them. For the purpose of this paper, we will make some minor adaptations to make the notions more suitable for Lorentzian metric spaces when necessary.

\begin{definition}
  \label{def:main:8}
  Let $(X,d,\ll,\leq,\tau)$ be a Lorentzian pre-length space. We will say that a triple $\triangle(p_1,p_2,p_3)$ is a \emph{timelike triangle} if $p_1\ll p_2\ll p_3$. We will call $p_1$ the \emph{past endpoint} and $p_3$ the \emph{future endpoint} of the triangle.

  We will name \emph{timelike geodesic triangle} to a timelike triangle $\triangle(p_1,p_2,p_3)$ together with timelike distance realizing curves $\gamma_{ij}$ connecting $p_i$ with $p_j$. For simplicity, we will denote again by $\triangle(p_1,p_2,p_3)$ the timelike geodesic triangles. 
\end{definition}

\begin{definition}
  \label{def:main:9}
  Let $(X,d,\ll,\leq,\tau)$ be a Lorentzian pre-length space. We will say that a timelike geodesic triangle $\overline{\triangle}(\overline{p}_1,\overline{p}_2,\overline{p}_3)$ in the $2$-dimensional Minkowski spacetime $\mathbb{L}^2$ is a comparison triangle for a timelike geodesic triangle  $\triangle(p_1,p_2,p_3)$ in $X$ if $\tau(p_i,p_j)=\overline{\tau}(\overline{p}_i,\overline{p}_j)$ for any $i<j$, being $\overline{\tau}$ the Lorentzian distance for $\mathbb{L}^2$.

  Given a point $x$ in the side $\gamma_{ij}$ of $\triangle (p_1,p_2,p_3)$, we say that a point $\overline{x}$ in the side $\overline{\gamma}_{ij}$ of $\overline{\triangle}(\overline{p}_1,\overline{p}_2,\overline{p}_3)$ is {\em the corresponding point of} $x$ if  $\tau(p_i,x)=\overline{\tau}(\overline{p}_i,\overline{x})$ and $\tau(x,p_j)=\overline{\tau}(\overline{x},\overline{p}_j)$. 
\end{definition}

Comparison triangles allow us to give a notion of bounded curvature for Lorentzian pre-length spaces. Actually, the notion of bounded curvature that we are going to use here is global and adapted to globally hyperbolic pre-length spaces, avoiding any extra technicality that we do not require. In particular, our notion is less general than the one presented in \cite{timelikecurv}. 

\begin{definition}
  \label{def:main:10}
  Let $X$ be a causally geodesically connected Lorentzian pre-length space and assume that $\tau$ is continuous. We will say that $X$ {\em has timelike curvature bounded below} by $0$ if the following property holds: for any timelike geodesic triangle $\triangle(p_1,p_2,p_3)$ and a comparison triangle $\overline{\triangle}(\overline{p}_1,\overline{p}_2,\overline{p}_3)$ in $\mathbb{L}^2$, for any pair of points $x,y$ on the sides of $\triangle(p_1,p_2,p_3)$, with $\overline{x},\overline{y}$ being the corresponding points in $\overline{\triangle}(\overline{p}_1,\overline{p}_2,\overline{p}_3)$, we have that
  \begin{equation}
    \label{eq:4}
    \tau(x,y)\leq \overline{\tau}(\overline{x},\overline{y}). 
  \end{equation}
\end{definition}

Next, we give the definition of bounded curvature by considering comparison angles, which are defined as follows.

\begin{definition}
  \label{def:main:11}
  Let $X$ a Lorentzian pre-length space, and consider a triangle $\triangle(p_1,p_2,p_3)$ and the co\-rres\-pon\-ding comparison triangle $\overline{\triangle}(\overline{p}_1,\overline{p}_2,\overline{p}_3)$ in $\mathbb{L}^2$. Then, we define the $0$-{\em comparison angle at} $p_1$ as:
  \begin{equation}
    \label{eq:5}
    \widetilde{\measuredangle}_{p_1}(p_2,p_3):=\overline{\measuredangle}_{\overline{p}_1}(\overline{p}_2,\overline{p}_3)\left( :=\langle\dot{\overline{\gamma}}_{12}(0),\dot{\overline{\gamma}}_{13}(0)\rangle \right),
  \end{equation}
  where $\langle\cdot,\cdot\rangle$ denotes the Minkowski metric. We also define the {\em signed angle} as $\widetilde{\measuredangle}^S_{p_1}(p_2,p_3):=-\widetilde{\measuredangle}_{p_1}(p_2,p_3)$\footnote{Observe that, according to the original definition \cite[Definition 2.1]{BeranSamann2023}, the signed angle depends on the vertex considered as the center of the angle. However, for the purpose of this paper, we will always consider angles either in the future or the past of the triangle, so the signed angle will always change the sign.}.
\end{definition}
Since the lengths of the sides of the comparison triangles coincide with those of the original triangles, previous definition of (hyperbolic) angle allows us to recover the law of cosines for angles in a Lorentzian pre-length space. Concretely, for a triangle $\triangle(p_1,p_2,p_3)$, if we denote $\alpha=\tilde{\measuredangle}_{p_1}(p_2,p_3)$, it follows that:
\begin{equation}
  \label{eq:6}
  \tau(p_1,p_3)^2= \tau(p_1,p_2)^2+\tau(p_2,p_3)^2- 2\tau(p_1,p_2)\tau(p_2,p_3)\cosh(\alpha)
\end{equation}
(here we have already taken into account the sign corresponding to the angle). As we can see from Figure \ref{fig:triangles}, in the smooth setting the curvature of the Lorentzian manifold imposes a concrete behaviour on the angle centered at a point, specially as we move closer to the point. So, in order to see how the bounded curvature affects angles, we need to introduce also the notion of angle between geodesics:

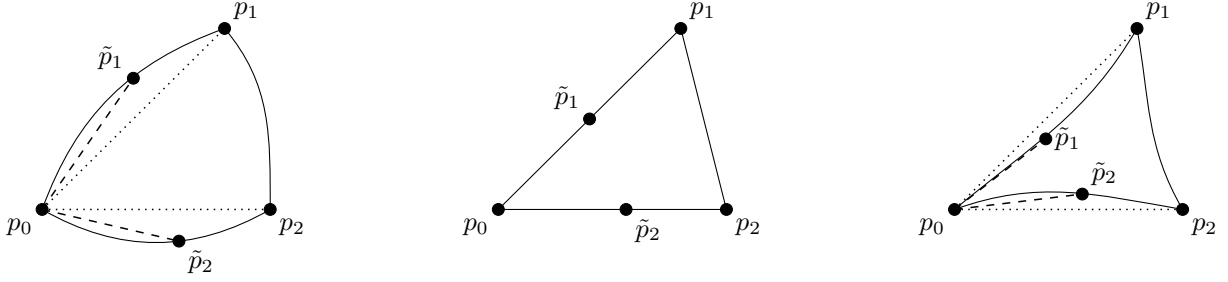
\begin{figure}
  \centering
  \begin{tikzpicture}[scale=1.2]
    \path [out=70, in=200, name path=A] (-5,0) edge (-3.,2);
    \path [out=-30, in=210, name path=B] (-5,0) edge (-2.5,0);
    \path [out=-50, in=90, name path=C] (-3,2) edge (-2.5,0);
    \filldraw[fill=black] (-5,0) circle (0.65mm)node[below left] {$p_{0}$};
    \filldraw[fill=black] (-3,2) circle (0.65mm)node[above right] {$p_{1}$};
    \filldraw[fill=black] (-2.5,0) circle (0.65mm)node[below right] {$p_{2}$};
    \path [dotted, line width = 0.2mm](-5,0) edge (-3,2);
    \path [dotted, line width = 0.2mm](-5,0) edge (-2.5,0);
    \path [dashed, line width = 0.2mm](-5,0) edge (-4,1.45);
    \path [dashed, line width = 0.2mm](-5,0) edge (-3.5,-0.35);
    \filldraw[fill=black] (-4,1.45) circle (0.65mm)node[above left] {$\tilde{p}_{1}$};
    \filldraw[fill=black] (-3.5,-0.35) circle (0.65mm)node[below right] {$\tilde{p}_{2}$};
    \path  (0,0) edge (2,2);
    \path  (0,0) edge (2.5,0);
    \path  (2,2) edge (2.5,0);
    \filldraw[fill=black] (0,0) circle (0.65mm)node[below left] {$p_{0}$};
    \filldraw[fill=black] (2,2) circle (0.65mm)node[above right] {$p_{1}$};
    \filldraw[fill=black] (2.5,0) circle (0.65mm)node[below right] {$p_{2}$};
    \filldraw[fill=black] (1,1) circle (0.65mm)node[above left] {$\tilde{p}_{1}$};
    \filldraw[fill=black] (1.4,0) circle (0.65mm)node[below right] {$\tilde{p}_{2}$};
    \path [out=40, in=240] (5,0) edge (7.,2);
    \path [out=20, in=170] (5,0) edge (7.5,0);
    \path [out=-80, in=120] (7,2) edge (7.5,0);
    \filldraw[fill=black] (5,0) circle (0.65mm)node[below left] {$p_{0}$};
    \filldraw[fill=black] (7,2) circle (0.65mm)node[above right] {$p_{1}$};
    \filldraw[fill=black] (7.5,0) circle (0.65mm)node[below right] {$p_{2}$};
    \filldraw[fill=black] (6,0.78) circle (0.65mm)node[right] {$\tilde{p}_{1}$};
    \filldraw[fill=black] (6.4,0.17) circle (0.65mm)node[above right] {$\tilde{p}_{2}$};
    \path [dotted, line width = 0.2mm](5,0) edge (7,2);
    \path [dotted, line width = 0.2mm](5,0) edge (7.5,0);
    \path [dashed, line width = 0.2mm](5,0) edge (6,0.78);
    \path [dashed, line width = 0.2mm](5,0) edge (6.4,0.17);
  \end{tikzpicture}
  \caption{\label{fig:triangles} The figure illustrates three triangles, each representing a different curvature: positive, zero, and negative. In these triangles we can see how the behaviour of the angles centered on $p_0$ change when we consider points $\tilde{p}_{1}$ and $\tilde{p}_2$ closer to $p_0$. }
\end{figure}

\begin{definition}
  \label{def:main:12}
  Let $p_0\in X$ be a point, and $\alpha,\beta:[0,\epsilon)\rightarrow X$ two future-directed timelike geodesics with $\alpha(0)=\beta(0)=p_0$. Assume that $I^+(\alpha)\subset I^+(\beta)$, and let $A_0:=\left\{ (s,t)\in (0,\epsilon)^2: \beta(t)\ll \alpha(s)  \right\}$. Then, we define the {\em upper angle between} $\alpha$ {\em and} $\beta$ at $p_0$ as
  \begin{equation}
    \label{eq:7}
    \measuredangle_{p_0}(\alpha,\beta)=\limsup_{(s,t)\in A_0;\,s,t\searrow 0}\widetilde{\measuredangle}_{p_0}(\alpha(s),\beta(t)).
  \end{equation}
  Accordingly, if previous limit exists, we will say that the angle
  between $\alpha$ and $\beta$ at $p_0$ exists. As in Definition \ref{def:main:11}, the signed angle is $\measuredangle_{p_0}^S(\alpha,\beta)=-\measuredangle_{p_0}(\alpha,\beta)$
\end{definition}

We are now in conditions to present the following consequence of \cite[Corollary 3.8 and Theorem 4.13]{BeranSamann2023}:

\begin{proposition}\label{thm:comparationangles}
  Let $(X,d,\ll,\leq,\tau)$ be a timelike geodesically connected Lorentzian pre-length space with timelike curvature bounded from below. Then, for any point $x\in X$ and any  future timelike geodesics $\alpha,\beta:[0,B]\rightarrow X$ with $\alpha(0)=\beta(0)=x$, it is satisfied
  \begin{equation}
    \label{eq:3}
    \measuredangle_x(\alpha,\beta)\geq \widetilde{\measuredangle}_x(\alpha(s),\beta(t)),
  \end{equation}
  for all $s,t\in [0,B]$ small enough such that $\alpha(s)$ and $\beta(t)$ are timelike related. An analogous version of the result can be obtained for past-directed timelike geodesics.
\end{proposition}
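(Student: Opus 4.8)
The plan is to deduce the inequality from the monotonicity of comparison angles that a lower curvature bound forces, rather than to establish it from scratch. The first thing to note is that the admissible parameters are well behaved: for $(s,t)\in A_0$ one has $\beta(t)\ll\alpha(s)$, while $x\ll\beta(t)$ and $x\ll\alpha(s)$ since $\alpha,\beta$ are future-directed timelike geodesics issuing from $x$; hence $x\ll\beta(t)\ll\alpha(s)$ and $\triangle(x,\beta(t),\alpha(s))$ is a genuine timelike geodesic triangle (its lateral sides being the subsegments $\alpha|_{[0,s]}$ and $\beta|_{[0,t]}$, maximising for $s,t$ small, and the far side furnished by timelike geodesic connectedness). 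Its comparison triangle in $\mathbb{L}^2$ exists, and the comparison angle at the past vertex $x$ is exactly $\widetilde{\measuredangle}_x(\alpha(s),\beta(t))$, so that every quantity entering the $\limsup$ in \eqref{eq:7} is defined along $A_0$.

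The core step is to invoke the combined content of \cite[Corollary 3.8 and Theorem 4.13]{BeranSamann2023}: under timelike curvature bounded from below, the comparison-angle function $(s,t)\mapsto\widetilde{\measuredangle}_x(\alpha(s),\beta(t))$ is monotone --- non-increasing in each of $s$ and $t$ along $A_0$, equivalently non-decreasing as $s,t\searrow 0$ --- and the upper angle of \eqref{eq:7} is accordingly a genuine limit equal to the supremum of the comparison angles over all admissible pairs. This is the Lorentzian analogue of Alexandrov's angle monotonicity for curvature bounded below, the direction of the inequality being the expected one once one recalls that a lower timelike curvature bound means non-positive sectional curvature on timelike planes and that the signed angle carries a global change of sign (see Definition \ref{def:main:11}). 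The only adaptation needed is to confirm that the arguments of \cite{BeranSamann2023}, phrased for Lorentzian pre-length spaces, carry over to the present slightly broader setting; this is immediate, since they rely solely on the reverse triangle inequality, the hyperbolic law of cosines \eqref{eq:6}, and the triangle comparison \eqref{eq:4}, all of which are available here.

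Granting this, the inequality is immediate: for every fixed admissible pair $(s,t)$ with $\alpha(s)$ and $\beta(t)$ timelike related,
\[
\measuredangle_x(\alpha,\beta)=\sup_{(s',t')\in A_0}\widetilde{\measuredangle}_x(\alpha(s'),\beta(t'))\geq\widetilde{\measuredangle}_x(\alpha(s),\beta(t)),
\]
which is precisely \eqref{eq:3}. The statement for past-directed timelike geodesics follows verbatim after passing to the time-dual structure $(X,d,\gg,\geq,\tau)$, in which past-directed geodesics become future-directed and the curvature bound is preserved. I expect the only real obstacle to be bookkeeping: fixing unambiguously the direction of the monotonicity (the hyperbolic $\cosh$ in \eqref{eq:6} together with the sign convention of Definition \ref{def:main:11} can easily reverse the inequality), and checking that $A_0$ is stable under simultaneously shrinking both parameters, so that the two one-variable monotonicities can be chained and the supremum is genuinely realised as the limit along $A_0$ as $s,t\searrow 0$.
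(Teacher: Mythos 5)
Your argument is correct and takes essentially the same route as the paper: the paper states Proposition \ref{thm:comparationangles} without a written proof, presenting it as a direct consequence of \cite[Corollary 3.8 and Theorem 4.13]{BeranSamann2023}, and your reconstruction --- forming the timelike geodesic triangle $\triangle(x,\beta(t),\alpha(s))$ via timelike geodesic connectedness and then reducing \eqref{eq:3} to the monotonicity of the comparison-angle function along $A_0$ guaranteed by those cited results, so that the $\limsup$ in \eqref{eq:7} dominates every admissible comparison angle --- is exactly the intended deduction. Your closing caveats (sign conventions in Definition \ref{def:main:11}, stability of $A_0$ under shrinking parameters, and compatibility of the paper's global curvature bound of Definition \ref{def:main:10} with the local one used in \cite{BeranSamann2023}) identify the right bookkeeping points but do not constitute a gap.
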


{Notions of curvature bounds from \textit{above} can be defined by reversing the inequalities \eqref{eq:4} and \eqref{eq:3}.} {Just as in the smooth scenario, non-negative timelike curvature bounds tends to hinder the formation of timelike lines in globally hyperbolic Lorentzian length spaces. Thus, lines can only exist in this context under very special circumstances. Namely, when the space splits. The following result can be considered a rough version of the classical Lorentzian Splitting Theorem \cite{Esch88,Gal89}.}

\begin{theorem}\label{thm:splitting}
  {\em \cite[Theorem 1.4]{BeranEtal2023}}
  Let $(X,d,\ll,\leq,\tau)$ be a connected regularly localisable, globally hyperbolic Lorentzian length space with proper metric $d$ and global non-negative timelike curvature satisfying the timelike geodesic prolongation and containing a complete timelike line $\gamma:\mathbb{R}\rightarrow X$. Then there is a $(\tau,\leq)$-preserving homeomorphism $f:\Sigma\times\mathbb{R}\rightarrow X$, where $\Sigma$ is a proper, strictly intrinsic metric of Alexandrov curvature $\geq 0$. 
\end{theorem}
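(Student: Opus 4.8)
The plan is to produce the splitting directly from the given complete timelike line $\gamma:\mathbb{R}\to X$ (parametrized by $\tau$-arclength, so that $\tau(\gamma(s),\gamma(t))=t-s$) by a synthetic Busemann-function argument, replacing the smooth PDE machinery of the classical proof by the triangle comparison and the angle monotonicity that are available here. First I would introduce the forward and backward Busemann functions
\begin{equation}
b^{+}(x)=\lim_{t\to\infty}\big(t-\tau(x,\gamma(t))\big),\qquad b^{-}(x)=\lim_{t\to\infty}\big(t-\tau(\gamma(-t),x)\big).
\end{equation}
The reverse triangle inequality forces the bracketed quantities to be monotone in $t$, so the limits exist; global hyperbolicity, properness of $d$, and completeness of $\gamma$ then yield finiteness and continuity of $b^{\pm}$ (a priori on the region lying chronologically between the two ends of $\gamma$, which the argument eventually identifies with all of $X$). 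A direct computation gives $b^{\pm}(\gamma(r))=\pm r$, and applying the reverse triangle inequality to the triple $\gamma(-t)\ll x\ll\gamma(t)$ and passing to the limit gives the superadditivity $b^{+}+b^{-}\ge 0$, with equality along $\gamma$.

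The crux is to upgrade this to the rigid identity $b^{+}+b^{-}\equiv 0$ together with a complete timelike line through every point. For each $x$ I would use the limit curve theorem --- available from global hyperbolicity and the $d$-compatible structure via an Arzel\`a--Ascoli argument on curves of bounded $d$-length --- to extract a future co-ray $\sigma_{x}^{+}$ and a past co-ray $\sigma_{x}^{-}$ as limits of maximizers from $x$ to $\gamma(\pm t)$; each is a complete maximizing timelike ray along which the corresponding Busemann function is affine with unit slope. The curvature hypothesis now enters: comparing the triangles $\triangle(\gamma(-t),x,\gamma(t))$ with their Minkowski models via Proposition \ref{thm:comparationangles} and letting $t\to\infty$, the comparison angle $\widetilde{\measuredangle}_{x}(\gamma(t),\gamma(-t))$ is forced to the degenerate value of a straight timelike geodesic, so that $\sigma_{x}^{+}$ and $\sigma_{x}^{-}$ have no angular defect and concatenate into a single complete timelike line $\sigma_{x}:\mathbb{R}\to X$ through $x$; equivalently, the equality case of the comparison inequality \eqref{eq:4}, read off through the law of cosines \eqref{eq:6}, pins $b^{+}+b^{-}$ to $0$. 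This is the step I expect to be the main obstacle: in the absence of smoothness one cannot appeal to the strong maximum principle and must instead extract geodesic collinearity purely from the rigidity of the equality case in the triangle and angle comparison.

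With an axis $\sigma_{x}$ through each point and $b:=b^{+}$ a continuous time function that is affine with unit slope along every axis, I would set the fiber $S:=b^{-1}(0)$, normalize $\sigma_{s}(0)=s$ for $s\in S$, and define
\begin{equation}
f:S\times\mathbb{R}\to X,\qquad f(s,t)=\sigma_{s}(t).
\end{equation}
Distinct axes cannot cross --- two lines through a common point coincide by the uniqueness built into the rigidity step --- and every point lies on a unique axis meeting $S$ exactly once, so $f$ is a bijection, with continuity of $f$ and $f^{-1}$ following from continuity of $b$ and the stability of the limit-curve construction. I would then define the metric $d_{S}$ on $S$ by the product formula, demanding $\tau(f(s,t),f(s',t'))=\sqrt{(t'-t)^{2}-d_{S}(s,s')^{2}}$ for causally related pairs, and verify through the reverse triangle inequality and the comparison that this is well defined and that $f$ preserves both $\tau$ and $\leq$, i.e. is the desired $(\tau,\leq)$-preserving homeomorphism onto the Lorentzian product.

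It remains to check that $(S,d_{S})$ is a proper, strictly intrinsic metric space of Alexandrov curvature $\ge 0$. Properness follows from global hyperbolicity and properness of $d$, since $d_{S}$-bounded closed subsets of $S$ embed into compact causal diamonds; the strictly intrinsic (geodesic) property follows from realizing spacelike distances by minimizers again furnished by the limit curve theorem; and the bound $\ge 0$ comes from transferring the timelike triangle comparison on $X$ to spacelike triangle comparison on the fibers --- precisely the Lorentzian-to-Riemannian correspondence, with its attendant sign reversal, recalled in the introduction. I expect a secondary difficulty here in showing that this transfer yields a genuine two-sided Alexandrov bound on the purely spacelike $S$, rather than only the one-sided estimate inherited directly from the timelike comparison.
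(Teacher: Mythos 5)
A preliminary point that changes the nature of this review: the paper does not prove this statement. It is imported verbatim from \cite{BeranEtal2023} (Theorem 1.4 there) and used as a black box in the proof of Theorem \ref{thm:main:2}, so the only meaningful comparison is with the proof in that cited work. At the level of strategy your sketch does track it: Beran--Ohanyan--Rott--Solis likewise run a synthetic Busemann-function argument modelled on Eschenburg--Galloway, constructing timelike co-rays by limit-curve arguments, establishing rigidity of $b^{\pm}$ from the lower timelike curvature bound, and assembling the splitting map from a level set of the Busemann function with the fiber metric extracted from the product formula for $\tau$. So as a road map your proposal is the right one, and you correctly identify where the main obstacle sits.

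As a proof, however, the sketch leaves essentially all of the content unproved, and the central step does not go through as you state it. The assertion that letting $t\to\infty$ in the comparison for $\triangle(\gamma(-t),x,\gamma(t))$ forces the comparison angle at $x$ to a degenerate value is not a consequence of the available inequalities: \eqref{eq:4} bounds $\tau(x,y)\leq\bar{\tau}(\bar{x},\bar{y})$ from one side only, and Proposition \ref{thm:comparationangles} bounds angles from \emph{below} by comparison angles, so neither pins the configuration at $x$ to the collinear one; moreover, for such triangles even to exist one needs $\gamma(-t)\ll x\ll \gamma(t)$, i.e. $X=I^{+}(\gamma)\cap I^{-}(\gamma)$, which is itself part of the globalization problem rather than a hypothesis. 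In \cite{BeranEtal2023} this is where the bulk of the work lies: continuity of the Busemann functions, the timelike co-ray condition, and the existence, maximality, \emph{timelike} character, uniqueness and completeness of co-rays (a limit of maximizers from $x$ to $\gamma(t)$ may a priori be null or fail to maximize in the limit --- securing this is exactly where regular localisability, geodesic prolongation and the curvature bound are consumed), followed by a flat-strip--type rigidity argument; none of this follows from a single angle-degeneracy observation. Likewise, ``distinct axes cannot cross by the uniqueness built into the rigidity step'' is a separate nontrivial theorem (uniqueness of timelike asymptotes), and the well-definedness of $d_{S}$ via your product formula (independence of the representing pairs, the triangle inequality, strict intrinsicness) requires verification you only gesture at. Finally, your closing worry is misdirected: the theorem asserts only the one-sided Alexandrov bound $\geq 0$ on $\Sigma$, not a two-sided bound, so no two-sided transfer is needed --- the one-sided estimate inherited from the timelike comparison is precisely what is claimed.
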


\section{Structure of the c-boundary}\label{sec:cbdry}

The proof of the main result of this paper will require to show that, under some mild conditions including certain curvature bound, the future causal boundary of a Lorentzian pre-length space is a single point. In this section we will obtain some technical results oriented towards this purpose. When possible, we will try to prove them in full generality, i.e. in the context of Lorentzian metric spaces.

\begin{proposition}
  \label{prop:main:1}
  Let $(X,\sigma,\tau)$ be a chronologically dense Lorentzian metric space with a compatible causal relation $\leq$. Assume that $X$ is causally simple and its future causal boundary is formed by one point. If $I^-(y)\neq \emptyset$ for any $y\in X$, then any future causal chain $\varsigma=\left\{ z_k \right\}$, with $z_k\not\ll z_{k'}$ for any $k'>k$, is convergent to a point $x\in X$ so $I^-(\varsigma)=I^-(x)$.
\end{proposition}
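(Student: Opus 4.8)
The plan is to identify the past set $P := I^-(\varsigma) = \bigcup_k I^-(z_k)$ with the past of a single point of $X$. First I would record two structural facts. Since $z_k \le z_{k+1}$, the push-up property gives $I^-(z_k) \subseteq I^-(z_{k+1})$, so $P$ is an increasing union of past sets and hence itself a past set; and since $z_k \not\ll z_{k'}$ for every $k' \ge k$ while $\ll$ is irreflexive, none of the $z_k$ lies in $P$. I would then show $P$ is indecomposable, i.e. an IP, by proving it is directed: given $a,b \in P$ with $a \ll z_k$, $b \ll z_m$ and $n = \max\{k,m\}$, push-up yields $a,b \ll z_n$; applying chronological density at $z_n$ produces a chain $u_j \ll z_n$ with $u_j \to z_n$, and since $I^+(a)\cap I^+(b)$ is open (strong causality, Alexandrov topology) and contains $z_n$, for large $j$ we get $a,b \ll u_j$ with $u_j \in I^-(z_n) \subseteq P$, the required common upper bound inside $P$.

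Since the future causal boundary consists of one point, every IP is either a PIP $I^-(x)$ with $x \in X$ or the unique TIP $\xi$. The heart of the argument is to rule out $P = \xi$, equivalently to show that $\{z_k\}$ does not escape to the boundary. Here I would exploit causal simplicity: from $z_1 \le z_k$ and $z_1 \not\ll z_k$ together with $\overline{I^+(z_1)} = J^+(z_1)$ and the openness of $I^+(z_1)$, every $z_k$ lies on $\partial I^+(z_1)$. The one-point-boundary hypothesis is precisely a no-observer-horizon condition, which should forbid the null-like chain $\{z_k\} \subset \partial I^+(z_1)$ from running off to the single ideal point; making this non-escape rigorous is the step I expect to be the main obstacle. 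Concretely I would either invoke the compactness of $\partial I^+(z_1)$ forced by the single boundary point to extract a subsequence $z_{k_j} \to x \in X$, or argue by contradiction: choose an auxiliary timelike chain $w_k \ll z_k$ with $w_k \ll w_{k+1}$ and $d(w_k,z_k)\to 0$ (available by chronological density and openness of $I^\pm$); by chronological completeness of the c-completion $\{w_k\}$ has an endpoint in $\overline{X}$, and if that endpoint were $\xi$ then $\xi = \bigcup_k I^-(w_k) \subseteq P$, which combined with $z_k \notin P$ and the terminality of $\xi$ is meant to produce the contradiction.

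Once a convergent subsequence $z_{k_j} \to x \in X$ is in hand, I would upgrade to convergence of the full chain. For fixed $m$ we have $z_m \le z_{k_j}$ for large $j$, so causal closedness (from causal simplicity) gives $z_m \le x$; comparing two subsequential limits $x,x'$ in the same way yields $x \le x' \le x$, and antisymmetry of $\le$ (from strong causality) forces $x = x'$, so $z_k \to x$, and $d(z_k,w_k)\to 0$ transports this to the auxiliary chain as well.

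Finally I would verify $P = I^-(x)$. For the inclusion $\subseteq$, each $z_k \le x$ as above, so push-up gives $I^-(z_k) \subseteq I^-(x)$ and hence $P \subseteq I^-(x)$. For $\supseteq$, if $v \ll x$ then $I^+(v)$ is open and contains $x = \lim z_k$, so $z_k \in I^+(v)$ for large $k$, i.e. $v \in I^-(z_k) \subseteq P$. Thus $P = I^-(x)$, which in particular exhibits $P$ as a PIP (consistent with $P \neq \xi$) and completes the proof that $\varsigma$ converges to $x$ with $I^-(\varsigma) = I^-(x)$.
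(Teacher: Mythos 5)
Your skeleton is sound and partly parallels the paper's: you show $P=I^-(\varsigma)$ is a past set with $z_k\notin P$ (so $P\neq X$), prove $P$ is an IP (your directedness argument is a legitimate alternative to the paper's explicit construction, via chronological density, of a chronological chain $\{x_k\}$ with $I^-(\{x_k\})=P$), and then aim to identify $P$ with a PIP. But the step you yourself flag as the main obstacle --- ruling out that $P$ is the unique TIP --- is genuinely open in your write-up, and neither of your two proposed routes closes it. Route (a) invokes compactness of $\partial I^+(z_1)$; that is Galloway's theorem for \emph{smooth} globally hyperbolic spacetimes with compact Cauchy surfaces (cited in the introduction of the paper), and nothing in the synthetic setting of this proposition provides an analogue. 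Route (b) produces no contradiction: if the endpoint of your auxiliary chain is the boundary point, you get $\xi=\bigcup_k I^-(w_k)\subseteq P$, and if $P$ is the unique TIP then $P=\xi$ --- perfectly consistent; $z_k\notin P$ contradicts nothing unless you already know $\xi=X$. That is exactly the missing idea, and it is the one the paper uses: a singleton future causal boundary is equivalent to the no-horizon condition, i.e.\ the unique TIP is all of $X$ (this equivalence is recalled in the introduction). Granting it, the argument is immediate precisely where you stall: $P$ is an IP, $P\neq X$ since the $z_k$ do not belong to it, hence $P$ cannot be the TIP and must be a PIP, $P=I^-(x)$. No escape-to-the-boundary analysis is needed at all.

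Your endgame also has defects. Having one convergent subsequence and uniqueness of subsequential limits does not give $z_k\to x$: without compactness (which only the unjustified route (a) would supply) the remaining terms may admit no convergent subsequence whatsoever. Moreover, the proposition is stated for Lorentzian metric spaces $(X,\sigma,\tau)$, which carry only a topology, so a condition like $d(w_k,z_k)\to 0$ is not meaningful at this level of generality. The paper sidesteps both problems by deducing convergence \emph{from} the identification $P=I^-(x)$: causal simplicity gives $\varsigma\subset\overline{I^-(\varsigma)}=\overline{I^-(x)}=J^-(x)$, so $z_k\leq x\ll p^+$ pushes up to $z_k\ll p^+$ for all $k$; and since the chain $\{x_k\}$ generating $P$ converges to $x$, some $x_{k_0}$ lies in $I(p^-,p^+)$, whence $p^-\ll x_{k_0}\ll z_{k_0}\leq z_k$ gives $p^-\ll z_k$ for $k\geq k_0$. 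Thus $z_k$ is trapped in the diamond $I(p^-,p^+)$, and strong causality (the topology being Alexandrov) yields $z_k\to x$ directly, with no subsequence extraction and no metric.
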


\begin{proof}
  Assume by contradiction the existence of a future causal chain $\varsigma=\{z_k\}$ with $z_k\not\ll z_{k+1}$ which does not converge on $X$.

  Since $I^-(z_k)\neq \emptyset$ for all $k$, the  chronological density of $X$ ensures the existence of a future-directed timelike chain $\left\{ y_n^k \right\}_n$ for each $k$ such that $y_n^k\nearrow z_k$. Now, we construct inductively a chronological chain $\left\{ x_k \right\}_k$ with $I^-(\varsigma)=I^-(\left\{ x_k \right\}_k)$  as follows: since $y_n^k\nearrow z_k$, the causal character of $\varsigma=\{z_k\}$ joined to the push-up property guarantees the existence of $n_k$ such that  $y_{k}^1,\ldots,y^{k}_{k},x_{k-1}\ll y^k_{n_k}$; then, define $x_1:=y_1^1$, and given $x_{k-1}$ define $x_k:=y^k_{n_k}$. In particular,   given $r,l\in \mathbb{N}$ arbitrary, there exists $k_0$ big enough such that  $y_r^l\ll x_k$ for any $k\geq k_0$. So, we have both,  $I^-(\left\{ x_k \right\})\subset I^-(\varsigma)$ (note that $x_k=y^k_{n_k}\ll z_k$) and  $I^-(\varsigma)\subset I^-(\left\{ x_k \right\})$ (note that any point $z\in I^-(\varsigma)$ belongs to the past of $y_l^r$ for appropriate $l$, $r$). Therefore, $I^-(\varsigma)=I^-(\left\{ x_k \right\})$.

  Moreover, as $z_k\not\ll z_{k+1}$, any point $z\in I^+(z_k)$ for some $k$ cannot be contained in $I^-(\varsigma)$, hence $I^-(\varsigma)\neq X$. Consequently, $I^-(\varsigma)$ should be a PIP, that is, $I^-(\varsigma)=I^-(x)$ for some $x\in X$. Moreover, since $\left\{ x_k \right\}$ is a chronological chain with $I^-(\left\{ x_k \right\})=I^-(x)$, it follows that $\{x_k\}\rightarrow x$ with $\sigma$. To finish the proof we need to show that $\varsigma\rightarrow x$, violating the non convergence of $\varsigma$.

  To this aim, let $U$ be a neighborhood of $x$, and let us prove that $z_k\in U$ for any $k$ big enough. Since $X$ is a causally continuous, thus strongly causal, spacetime, the topology $\sigma$ coincides with the Alexandrov one. So, there exist $p^{\pm}\in X$ such that $x\in I(p^-,p^+)\subset U$. On the other hand, since $I^-(\varsigma)=I^-(x)$, necessarily $\varsigma\subset \overline{I^-(\varsigma)}=\overline{I^-(x)}=J^-(x)$, where the last equality follows from causal simplicity. Therefore, $z_k\leq x\ll p^+$ for any $k$, and the push-up property gives $z_k\ll p^+$ for any $k$. Finally, since $x_k\rightarrow x\subset I(p^-,p^+)$, necessarily  $x_{k_0}\in I(p^-,p^+)$ for some $k_0$. So, taking into account that, by construction, $x_{k_0}=y^{k_0}_{n_{k_0}}\ll z_{k_0}$, necessarily $x_k\ll z_{k_0}\leq z_k$ for any $k\geq k_0$, and again the push-up property gives $p^-\ll z_k$ for any $k\geq k_0$. In conclusion, $z_k\in I(p^-,p^+)\subset U$ for $k\geq k_0$, and using  that $U$ is an arbitrary neighborhood of $x$, we conclude that $z_k\rightarrow x$, as required.
\end{proof}

As a straightforward consequence we {have the following:}

\begin{corollary}\label{coro:nonull}
  Any causally simple Lorentzian pre-length space whose future causal boundary is formed by one point has no inextensible null lines.
\end{corollary}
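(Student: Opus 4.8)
The plan is to argue by contradiction, reducing the existence of an inextensible null line to the convergence statement of Proposition \ref{prop:main:1}. Suppose $\gamma\colon I\to X$ were an inextensible null line. Since $\gamma$ is in particular future inextensible, $\sup I$ is not attained as a future endpoint, so I can pick parameters $t_k\nearrow\sup I$ and set $z_k:=\gamma(t_k)$. Because $\gamma$ is causal we have $z_k\leq z_{k'}$ for $k'>k$, so $\varsigma:=\{z_k\}$ is a future causal chain; because $\gamma$ is null we have $z_k\not\ll z_{k'}$ for every $k'>k$. Thus $\varsigma$ is precisely a chain of the type handled by Proposition \ref{prop:main:1}.

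Next I would check that the hypotheses of Proposition \ref{prop:main:1} are in force. Causal simplicity (hence strong causality, hence the distinguishing property) and the single-point future causal boundary are assumed, while chronological density and the condition $I^-(y)\neq\emptyset$ for all $y\in X$ hold in the localisable setting relevant to us, since every localising neighbourhood meets the chronological past of each of its points. Granting these, Proposition \ref{prop:main:1} applies to $\varsigma$ and yields a point $x\in X$ with $z_k\to x$ and $I^-(\varsigma)=I^-(x)$.

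It remains to convert this convergence into a contradiction with inextensibility. First note $\gamma(t)\leq x$ for every $t\in I$: given $t$, choose $t_k>t$, so that $\gamma(t)\leq z_k\leq x$, the last relation coming from $\varsigma\subset\overline{I^-(\varsigma)}=\overline{I^-(x)}=J^-(x)$ by causal simplicity, exactly as in the proof of the Proposition. I would then upgrade convergence of the subsequence $\{z_k\}$ to convergence of the whole curve: repeating the push-up and Alexandrov-topology argument of Proposition \ref{prop:main:1} with $\gamma(t)$ in place of $z_k$ shows that, whenever $x\in I(p^-,p^+)$, one has $\gamma(t)\in I(p^-,p^+)$ for all $t$ close enough to $\sup I$, whence $\gamma(t)\to x$. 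Consequently $x$ is a future endpoint of $\gamma$, so $\gamma$ is future extendible (set $\gamma(\sup I)=x$), contradicting its inextensibility.

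The main obstacle is this last upgrade: Proposition \ref{prop:main:1} only guarantees convergence of the sampled chain $\varsigma$, whereas inextensibility is a statement about the full curve, so one must use monotonicity of $\gamma$ together with strong causality to force $\gamma(t)\to x$ rather than mere convergence along the subsequence. A secondary point requiring care is the verification of chronological density and of nonempty chronological pasts, which are needed to invoke the Proposition but are not literally part of the bare pre-length space axioms.
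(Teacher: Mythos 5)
Your proposal is correct and follows exactly the route the paper intends: the paper offers no explicit proof, presenting the corollary as a ``straightforward consequence'' of Proposition \ref{prop:main:1}, and your argument---sampling the null line to get a causal chain with no chronological relations, invoking the proposition, and deriving a future endpoint contradicting inextensibility---is precisely that consequence spelled out. Your two added points of care (upgrading convergence of the sampled chain to convergence of the whole curve via the Alexandrov-topology/push-up argument, and noting that chronological density and $I^-(y)\neq\emptyset$ are not literally pre-length space axioms but hold in the localisable setting where the corollary is applied) are genuine details the paper leaves implicit, and you handle both correctly.
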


{We close this section with a structure result  that will be fundamental in the proof of BSC in this setting.}

\begin{proposition}
  \label{thm:main:1}
  Let $(X,\sigma,\tau)$ be a Lorentzian metric space, where $X=\Sigma\times\mathbb{R}$ and $\sigma$ is the product topology. Assume also that $\tau((x,t),(x,s))>0$ for any $t,s\in \mathbb{R}$ with $t<s$.
  If $\varsigma=\left\{ (x_n,t_n) \right\}$ is a chronological chain with $t_n\nearrow \infty$ and $x_n\rightarrow x_0\in X$, then $I^-(\varsigma_{x_0})\subset I^-(\varsigma)$, where $\varsigma_{x_0}:=\left\{ (x_0,k) \right\}_k$.

  In particular, if $\Sigma$ is compact then any TIP contains the past of $\varsigma_{x_0}$ for some $x_0\in \Sigma$.
\end{proposition}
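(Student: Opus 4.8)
The plan is to prove the first (general) statement by a direct chronological argument based on lower semicontinuity of $\tau$, and then to deduce the ``in particular'' clause by reducing an arbitrary TIP to a chain of the type covered by the first statement.

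\emph{First statement.} I would take $z\in I^-(\varsigma_{x_0})$, so that $z\ll (x_0,k_0)$ for some $k_0$, i.e.\ $\tau(z,(x_0,k_0))>0$. Since $x_n\to x_0$ and $\sigma$ is the product topology, $(x_n,k_0)\to (x_0,k_0)$, hence $(z,(x_n,k_0))\to (z,(x_0,k_0))$ in $X\times X$. Lower semicontinuity of $\tau$ then gives $\liminf_n \tau(z,(x_n,k_0))\geq \tau(z,(x_0,k_0))>0$, so $z\ll (x_n,k_0)$ for all $n\geq N_1$. Because $t_n\nearrow\infty$, there is $N_2$ with $t_n>k_0$ for $n\geq N_2$, and the hypothesis on vertical curves gives $(x_n,k_0)\ll (x_n,t_n)$. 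For $n\geq\max(N_1,N_2)$ the reverse triangle inequality applied to $z\ll (x_n,k_0)\ll (x_n,t_n)$ yields $\tau(z,(x_n,t_n))\geq \tau(z,(x_n,k_0))+\tau((x_n,k_0),(x_n,t_n))>0$, i.e.\ $z\ll (x_n,t_n)$ and thus $z\in I^-(\varsigma)$. As $z$ was arbitrary, $I^-(\varsigma_{x_0})\subset I^-(\varsigma)$.

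\emph{The ``in particular'' clause.} Let $P$ be a TIP and write $P=I^-(\varsigma)$ for a future chronological chain $\varsigma=\{w_n\}$, $w_n=(x_n,t_n)$. Passing to a subsequence does not change the generated past set: by transitivity $w_n\ll w_m$ for $m>n$, so $I^-(w_n)\subset I^-(w_m)$ and hence $I^-(\{w_{n_k}\})=I^-(\varsigma)=P$ for every subsequence. I would first establish that $\limsup_n t_n=\infty$; granting this, compactness of $\Sigma$ lets me refine to a subsequence with, simultaneously, $t_{n_k}\nearrow\infty$ and $x_{n_k}\to x_0\in\Sigma$. This subsequence is again a chronological chain generating $P$, so applying the first statement to it gives $I^-(\varsigma_{x_0})\subset I^-(\{w_{n_k}\})=P$, which is exactly the asserted conclusion.

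\emph{Main obstacle.} The delicate point is ruling out the possibility that $\{t_n\}$ is bounded, for then $P$ would be a PIP rather than a TIP. If $\{t_n\}\leq T$, compactness of $\Sigma$ and boundedness of $\{t_n\}$ produce a subsequence $w_{n_k}\to w_*=(x_0,t_*)$, still generating $P$. The inclusion $I^-(w_*)\subset I^-(\{w_{n_k}\})$ follows again from lower semicontinuity of $\tau$, exactly as in the first statement. The hard direction is $I^-(\{w_{n_k}\})\subset I^-(w_*)$, since lower semicontinuity bounds $\tau$ on the wrong side: here I would invoke closedness of the causal relation (which holds in the globally hyperbolic setting where this proposition is applied) to pass from $w_{n_k}\leq w_{n_{k'}}$ with $w_{n_{k'}}\to w_*$ to $w_{n_k}\leq w_*$, and then use the push-up property to upgrade $z\ll w_{n_k}\leq w_*$ to $z\ll w_*$. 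This would give $P=I^-(w_*)$, contradicting that $P$ is terminal, and so force $\limsup_n t_n=\infty$. (In the metric-product case the same contradiction is obtained more directly, since there $t_n$ is strictly increasing along any chronological chain and boundedness forces $\{w_n\}$ to converge by a telescoping estimate on $d$.)
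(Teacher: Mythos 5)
Your proof of the main inclusion is correct and is essentially the paper's own argument in a different arrangement. The paper fixes the point $(x_0,k)$ and uses openness of $I^+((x_0,k))$ (coming from lower semicontinuity of $\tau$), the vertical chronology hypothesis and transitivity of $\ll$ to show that a whole product set $V\times(k+1-\epsilon,\infty)$ lies inside $I^+((x_0,k))$, which then captures the tail of $\varsigma$ since $x_n\to x_0$ and $t_n\nearrow\infty$; you instead fix $z\in I^-(\varsigma_{x_0})$ and slide the comparison point horizontally to $(x_n,k_0)$ before pushing up along the fibre via the reverse triangle inequality. Same three ingredients, equivalent bookkeeping. The genuine divergence is in the ``in particular'' clause. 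The paper disposes of it in one sentence: extract $x_{n_k}\to x_0$ by compactness of $\Sigma$ and note that ``any TIP is the past of some chain like $\varsigma$'' --- that is, it silently takes for granted that a chain generating a TIP has $t_n\to\infty$ up to a subsequence, and never examines the bounded-$\{t_n\}$ case. You correctly isolate this as the delicate point and supply an argument: if $\{t_n\}$ were bounded, compactness yields $w_{n_k}\to w_*$, lower semicontinuity gives $I^-(w_*)\subset P$, and the reverse inclusion $P\subset I^-(w_*)$ follows from causal closedness ($w_{n_k}\le w_{n_{k'}}\to w_*$ implies $w_{n_k}\le w_*$) plus push-up, so $P$ would be a PIP, contradicting terminality. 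This is sound, but be aware that it invokes a compatible causal relation, causal closedness and push-up, none of which figure among the hypotheses of the proposition as stated (a bare Lorentzian metric space with positive vertical chronology); you flag this yourself, and those hypotheses are indeed available where the proposition is applied (Theorem \ref{thm:main:3} assumes causal closedness, and push-up follows from compatibility, cf.\ the remark after Definition \ref{def:main:13}). So strictly as a proof of the proposition's literal statement your exclusion of bounded $\{t_n\}$ rests on imported assumptions --- but the paper's own proof offers no argument at all for that case, so on this point your treatment is more, not less, complete than the original, and it is correct in the context in which the proposition is actually used.
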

\begin{proof}
  The result is quite straightforward from the hypotheses once we recall that the chronological relation is open. In fact, given $(x_0,k)\in \Sigma\times\mathbb{R}$, there exists  an open neighborhood $V\subset\Sigma$ of $x_0$ and some $\epsilon>0$ such that $V\times (-\epsilon+k+1,\infty)\subset I^+((k,x_0))$. From the hypothesis about $\varsigma$, there exist $n_0$ big enough such that $t_{n_0}>-\epsilon+k+1$ and $x_{n_0}\in V$. Hence, $(t_n,x_n)\in I^+(k,x_0)$ (i.e. $(k,x_0)\ll (t_n,x_n)$) for any $n>n_0$. Since previous relation holds for any $k$, we deduce that $I^-(\varsigma_{x_0})\subset I^-(\varsigma)$.

  For the last statement, note that, if $\Sigma$ is compact, the sequence $\{x_n\}$ associated to 
  any future chronological chain $\varsigma=\left\{ (t_n,x_n) \right\}$ is convergent, up to a subsequence. So, the conclusion follows by noting that any TIP is the past of some chain like $\varsigma$.
\end{proof}

\section{A low regularity version of BSC}\label{sec:main}

{Notice that Proposition \ref{thm:main:1}} ensures that, if $\Sigma$ is compact, any TIP includes the past of any ``vertical line'' of $X$. Therefore, if we prove that the past of any of these lines is $X$, then the future causal boundary will consist of a single point (and Proposition \ref{prop:main:1} will apply). However, this is not true in general (see, for instance, {\cite[Section 4.2]{AlFlores}} for cases where each vertical line defines a different TIP), and consequently, an additional hypothesis is required. {A timelike curvature bound from below provides such a condition in the context of} Lorentzian \emph{pre-length} spaces. In fact, the main result of this section is the following:

\begin{theorem}
  \label{thm:main:3}
  Let $(X,\ll,\leq,d,\tau)$ be a strongly causal, causally closed, timelike geodesically connected Lorentzian length space with $\tau$ continuous and having timelike curvature bounded from below by zero. {Assume that $X=\Sigma\times \mathbb{R}$ splits as a topological product, and that the vertical lines are timelike complete} with $\tau((x,t),(x,s))>0$ for any $t,s\in \mathbb{R}$ and $t<s$. Then, the past of any vertical line $\gamma(s)=(b,s)$ with $b\in \Sigma$ and $s\in [0,\infty)$, is the entire set $X$. In particular,  if $\Sigma$ is compact then the future causal boundary of $X$ is a single point.
\end{theorem}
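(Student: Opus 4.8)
The plan is to establish the displayed identity $I^{-}(\gamma_b)=X$ first and then read off the boundary statement. The ``in particular'' clause is then immediate: once the past of \emph{every} vertical line equals $X$, the last assertion of Proposition~\ref{thm:main:1} says that, for compact $\Sigma$, every TIP contains the past of some vertical chain $\varsigma_{x_0}=\{(x_0,k)\}_k$; since $I^{-}(\varsigma_{x_0})=I^{-}(\gamma_{x_0})=X$, every TIP must coincide with $X$. Hence there is a single terminal indecomposable past set, so $\hat{\partial}X=\hat{X}\setminus X$ reduces to a point, and Proposition~\ref{prop:main:1} applies to this setting.

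For the main identity I would set $\Omega:=I^{-}(\gamma_b)$, an open past set, and first record the mechanism that makes the claim plausible. Since the vertical relation is chronological and $\ll$ is open, every future cone $I^{+}((x,s))$ contains an entire product neighbourhood of $(x,s+\delta)$, so one may always ``step sideways'' a little while advancing in time. Chaining finitely many such steps along a path in $\Sigma$ from $b$ to a prescribed base point---and using that $\ll$ is transitive via the reverse triangle inequality---would place any point of $X$ into $\Omega$, \emph{provided the sideways reach does not shrink to zero as the height grows}. Thus the only obstruction to $\Omega=X$ is a particle horizon, i.e.\ a ``closing up'' of the future cones at infinity; this is exactly the phenomenon realised by the examples in \cite{AlFlores}, and the reason a curvature hypothesis must enter.

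To make this precise I would argue by contradiction, assuming $\Omega\neq X$. Exploiting that $\Omega$ is open and nonempty while the threshold $h(a):=\sup\{s:(a,s)\in\Omega\}$ equals $+\infty$ at $a=b$, connectedness of the base lets me locate a base point $e$ and a height $h_0$ with $w:=(e,h_0)\in\partial\Omega$. The push-up property then forces the whole future vertical ray $\{(e,s):s\geq h_0\}$---a \emph{future-complete} timelike curve---to miss $\Omega$, whereas the points $(e,s_n)$ with $s_n\nearrow h_0$ lie in $\Omega$, say $(e,s_n)\ll(b,t_n)$. Causal closedness excludes a bounded sequence $\{t_n\}$ (otherwise $w\in\Omega$), so necessarily $t_n\to\infty$. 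This horizon configuration---a future-complete timelike ray trapped outside $I^{-}(\gamma_b)$, approximated by points whose future reaches arbitrarily high on $\gamma_b$---is what I would aim to contradict.

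The heart of the proof, and the step I expect to be the main obstacle, is to rule out this configuration using the non-negative timelike curvature bound. Fixing a past vertex $P_1=(e,s_0)$ with $s_0<s_1$, I would analyse the timelike triangles $\triangle(P_1,(e,s_n),(b,t_n))$ and their $\mathbb{L}^2$-comparison triangles as $n\to\infty$: two sides tend to $\infty$ while the side $\tau(P_1,(e,s_n))$ stays bounded, so by the law of cosines \eqref{eq:6} the comparison angles at $P_1$ converge. Passing to a limiting asymptotic ray from $w$ toward ``infinity along $\gamma_b$'' and invoking the angle comparison of Proposition~\ref{thm:comparationangles}, one controls the genuine angle between $\gamma_e$ and this asymptote, which should force a future-directed timelike geodesic issued from a point just above $w$ to still reach $\gamma_b$---that is, $(e,s)\in\Omega$ for some $s>h_0$, contradicting that the ray lies outside $\Omega$. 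The delicate points are the existence and regularity of the limiting asymptote (a limit-curve/Arzel\`a--Ascoli step, which is subtle here since global properness is not among the hypotheses) and the conversion of the angle estimate into an actual chronological relation; should the asymptote degenerate to a null ray, I would instead derive a contradiction in the spirit of Corollary~\ref{coro:nonull}, since non-negative timelike curvature bounds obstruct the null lines that would generate such a horizon.
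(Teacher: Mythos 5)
Your reduction of the ``in particular'' clause to Proposition~\ref{thm:main:1} is exactly the paper's, and your localization of a horizon configuration (the threshold point $w=(e,h_0)\in\partial\Omega$, the trapped vertical ray above it, and $t_n\to\infty$ via causal closedness and push-up) is sound. But the heart of the argument is missing, and you say so yourself: the contradiction is deferred to an asymptotic-ray construction whose key steps (existence and timelike character of the asymptote, and ``conversion of the angle estimate into an actual chronological relation'') are precisely what would have to be proved, and for which no Busemann-type machinery is available in this synthetic setting. Moreover, two assertions in your setup are unjustified as stated: you claim that in the triangles $\triangle\bigl(P_1,(e,s_n),(b,t_n)\bigr)$ ``two sides tend to $\infty$'' --- nothing controls $\tau((e,s_n),(b,t_n))$, since $s_n$ and $t_n$ are chosen independently --- and that ``the comparison angles at $P_1$ converge.'' Writing $b_n=\tau(P_1,(e,s_n))$, $c_n=\tau(P_1,(b,t_n))$, $a_n=\tau((e,s_n),(b,t_n))$, the law of cosines \eqref{eq:6} gives $\cosh\bar\alpha_n=\frac{b_n^2+(c_n-a_n)(c_n+a_n)}{2b_nc_n}$, which depends on the uncontrolled defect $c_n-a_n$ and may oscillate or diverge; no convergence follows, and Proposition~\ref{thm:comparationangles} only bounds $\bar\alpha_n$ by the true angle, which you have not shown to be finite in the limit.

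The paper closes exactly this gap by a different coordination of the two sequences, which is the one idea your proposal lacks. Working with two \emph{nearby} vertical lines $\delta,\gamma$ (arbitrary pairs are then handled by chaining along a compact path in $\Sigma$), it fixes $B_i$ on $\gamma$ with $\tau(A_1,B_i)>i-1$ and then uses the \emph{continuity of $\tau$} plus an intermediate-value argument along the line $\delta$ that exits $I^-(\gamma)$ to choose $A_i$ on $\delta$ with $0<\tau(A_i,B_i)<1/i$: a degenerating side. With $\tau(A_1,A_i)$ bounded (the $A_i$ converge, since otherwise $\delta\subset I^-(\gamma)$), the law of cosines then \emph{forces} $\bar\alpha_i\to\infty$. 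On the other side, the true angles $\alpha_i=\measuredangle_{A_1}(\delta_i,\lambda_i)$ converge to a finite angle: the limit curve theorem \cite[Theorem 2.23]{BeranEtal2023} produces a limit geodesic $\lambda$ from $A_1$, which is timelike because $L_\tau(\lambda)\geq\limsup\tau(A_1,B_i)=\infty$, and continuity of angles \cite[Prop. 4.15]{BeranSamann2023} gives $\alpha_i\to\measuredangle_{A_1}(\delta,\lambda)<\infty$. The curvature bound, via Proposition~\ref{thm:comparationangles}, yields $\bar\alpha_i\leq\alpha_i$, an immediate contradiction --- no asymptote, no co-ray condition, and no conversion of angles into chronological relations is ever needed. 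To repair your proposal you would have to import this pointwise coordination ($\tau(A_i,B_i)\to 0$ while $\tau(A_1,B_i)\to\infty$) into your horizon configuration; without it, your limiting-ray step is an open problem rather than a proof.
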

\begin{proof} Observe that the past of any future-directed vertical line contains the line itself. So, in order to prove that the past of any such line is the entire $X$, it suffices to show that they all share the same past. To this aim, consider two arbitrary vertical lines, say $\delta,\gamma:[0,\infty)\rightarrow\Sigma\times\mathbb{R}$, $\delta(s)=(a,s)$, $\gamma(s)=(b,s)$, $a\neq b$. First, we will assume that $a,b$ satisfy $(a,0)\in I^-(\gamma)$ and $(b,0)\in I^-(\delta)$. This happens, for instance, if $a,b$ are close enough between them, since $(a,0)\in I^-(\delta)$, $(b,0)\in I^-(\gamma)$, and $I^-(\delta)$, $I^-(\gamma)$ are both open sets.

  Assume by contradiction that $I^-(\gamma)$ does not contain entirely the curve $\delta$. Next, we are going to define two sequences $\left\{A_i \right\}\subset \mathrm{Im}(\delta)$ and $\left\{ B_i \right\}\in \mathrm{Im}(\gamma)$ contained in the image of $\delta$ and $\gamma$, respectively. Since $\lim_{s\rightarrow \infty}\tau((x,t),(x,s))=\infty$, we can define the sequence $\left\{ B_i \right\}$ so that $\tau(B_i,B_{i+1})>1$ for any $i$; so, by the reverse triangle inequality,  $\tau(B_i,B_j)>j-i$ for any $j>i$. We will also take $B_1$ so that $A_1:=(a,0)\ll B_1$. For each $i\in \mathbb{N}$, choose $A_i$ so that $0<\tau(A_i,B_i)<1/i$ as follows: since $(a,0)=A_1\ll B_1\ll B_2$, necessarily $\tau(A_1,B_2)>0$. Since $\tau$ is continuous, there exists $s'_{2}\in \mathbb{R}$ such that $\tau((a,s'_2),B_2)=0$ (otherwise, $\tau((a,s),B_2)>0$ for all $s$ and $\delta\subset I^-(\gamma)$). So, we can take $A_2=(a,s_{2})$ with $s_2<s'_2$ close enough so that $0<\tau(A_2,B_2)<\frac{1}{2}$. By repeating this process iteratively, we construct the required sequence $\{A_i=(a,s_i)\}$.

  By construction, the sequence $\{s_i\}$ of $\{A_i=(a,s_i)\}$ should converge. In fact, otherwise, since $\{s_i\}$ is increasing, we deduce that $\delta$ is contained in the past of $\gamma$, a contradiction. Therefore, since $\tau$ is continuous, the values $\tau(A_i,A_j)$ for any $i,j\in \mathbb{N}$ are uniformly bounded (see the picture on the left of Figure \ref{fig:1}).

  \begin{figure}
    \centering
    \begin{tikzpicture}
      [
      punto/.style = {circle, fill=black, scale=0.5},
      puntop/.style = {circle, fill=black, scale=0.3},
      every node/.style ={scale=1.3}
      ]
      \draw [->] (0,0) -- node [left, pos =0.95] {$\delta$} (0,7.5);
      \draw [->] (4,0) -- node [right, pos = 0.95] {$\gamma$} (4,7.5);
      \coordinate [punto, label= left:{$A_1$}] (A1) at (0,0);
      \coordinate [punto, label= left:{$A_i$}] (Ai) at (0,1.6);
      \coordinate [punto, label= left:{$A_2$}] (A2) at (0,0.9);
      \coordinate [puntop] (none) at (0,1.9);
      \coordinate [puntop] (none) at (0,2.1);
      \coordinate [punto] (limitp) at (0,2.3);
      \coordinate [punto] (none) at (4,2);
      \coordinate [punto, label= right:{$B_2$}] (B2) at (4,3);
      \coordinate [punto] (B1) at (4,4);
      \coordinate [punto] (none) at (4,5);
      \coordinate [punto, label = right:{$B_i$}] (Bi) at (4,6);
      \draw [thick,dashed, in=240, out=25,color=red] (A1) to  (B2);
      \draw [thick,dashed,color=red] (A2) to [bend right=5] (B2);
      \draw [thick,dashed, in=240, out=50,color=blue] (A1) to  (Bi);
      \draw [thick,dashed,color=blue] (Ai) to [bend right = 10] (Bi);
      \draw [thick, dotted, in=267, out=45] (limitp) to (3.8,7.5);
      \draw [thick, dashed, color=red] (A1) to (A2);
      \draw [thick, dashed, color=blue] (A1) to (Ai);
      \centerarc[thick](0,0)(50:90:0.7cm);
      \coordinate [label= right:{$\alpha_{i}$}] (alpha) at ({0.7*cos(55)},{0.7*sin(55)});
      \coordinate [punto, label= left:{$\bar{A}_1=(0,0)$}] (A1b) at (10,0);
      \coordinate [punto, label= right:{$\bar{B}_{2}$}] (B2b) at (14,3);
      \coordinate [punto, label= left:{$\bar{A}_{2}$}] (A2b) at (10,1.3);
      \coordinate [punto, label= left:{$\bar{A}_{i}$}] (Aib) at (10,3.3);
      \coordinate [punto, label= right:{$\bar{B}_{i}$}] (Bib) at (14,6);
      \draw [thick, dashed, color=red] (A1b) to (B2b) to (A2b) to (A1b);
      \draw [thick, dashed,color=blue] (A1b) to node [pos=0.7, right,color=black] {\small$\tau(A_1,B_i)$} (Bib) to (Aib) to node [pos=0.3,left,color=black] {\small$\tau(A_1,A_i)$} (A1b);
      \centerarc[thick](10,0)(55:90:0.7cm);
      \coordinate [label = left:{$\overline{\alpha}_i$}] (alpha2) at ({10+0.6*cos(90)},{0.6*sin(90)});    
    \end{tikzpicture}

    \caption{\label{fig:1} The picture on the left shows the curves $\delta$ and $\gamma$ with the corresponding points $\left\{ A_i \right\}$ and $\left\{ B_i \right\}$. By construction, $\{A_i\}$ converges to a point in $\delta$ and $\left\{ B_i \right\}$ satisfies $\tau(B_i,B_{i+1})>1$ and $A_i\ll B_i$ for all $i$. In addition,  $\left\{\tau(A_i,B_i) \right\}$ converges to $0$.\newline
      The picture on the right illustrates two comparison triangles in Minkowski spacetime. The triangle $\triangle{(\bar{A}_1\bar{B}_i\bar{A}_i)}$ satisfies that the length of the segment determined by $\bar{A}_1$ and $\bar{B}_i$ is $\tau(A_{1},B_i)$, the length of the segment determined by $\bar{A}_1$ and $\bar{A}_i$ is $\tau(A_1,A_i)$, and the angle determined by both segments is $\bar{\alpha}_i$. \newline
      If we assume that the timelike curvature of $X$ is bounded from below by $0$, necessarily $\bar{\alpha}_i\leq \alpha_i$ for all $i$, where $\alpha_i= \widetilde{\measuredangle}_{A_1}(A_i,B_i)$. 
    }
  \end{figure}

  \smallskip

{Denote by $\delta_i$ a (distance-realizing) timelike geodesic connecting $A_1$ to $A_i$, denote by
    $\lambda_i$ a timelike geodesic from $A_1$ to $B_i$,} and define $\alpha_i:=\measuredangle_{A_1}(\delta_i,\lambda_i)$. Observe that $\delta_i$ is not necessarily a segment of $\delta$, since $\delta$ is not necessarily a geodesic. By the Limit Curve Lemma \cite[Theorem 2.23]{BeranEtal2023}, there exists a future directed causal limit curve $\lambda$ emanating from $A_1$; indeed, $\lambda$ is a geodesic (since all $\lambda_i$ are).  Moreover,  since $\tau (A_1,B_i)\ge \tau (A_1,B_1)+\tau (B_1,B_i)>i-1$ we have (see \cite[Prop. 3.17]{KunzingerSamann2018}):
  \[
    L_\tau (\lambda )\ge\limsup L_\tau ( \lambda_i ) =\limsup \tau (A_1,B_i) =\infty ,
  \]
  which implies that $\lambda$ is timelike. Moreover, by continuity of angles (see \cite[Prop. 4.15]{BeranSamann2023}), $\alpha_i\to \measuredangle_{A_1}(\delta ,\lambda)$, and consequently the angle $\measuredangle_{A_1}(\delta ,\lambda)$ (between two future directed timelike geodesics) is finite. 

  Now, consider a comparison triangle $\triangle(\bar{A}_{1}\bar{A}_{i}\bar{B}_{i})$ for $\triangle (A_1A_iB_i)$, and denote $\overline{\alpha}_{i}=\bar{\measuredangle}_{\overline{A}_1}(\overline{A}_i\overline{B}_i)$. Taking into account that $X$ has timelike curvature bounded from below, Proposition \ref{thm:comparationangles} ensures that $\alpha_i\geq \bar{\alpha}_i$, and recalling that $\{\alpha_i\}$ is convergent, we deduce that $\{\bar{\alpha}_i\}$ are bounded from above.

  Finally, if we apply the Law of Cosines \eqref{eq:6} to $\triangle(\bar{A}_{1}\bar{A}_i\bar{B}_i)$ then
  \[
    \bar{\tau}(\bar{A}_{i},\bar{B}_{i})^2=\bar{\tau}(\bar{A}_{1},\bar{A}_{i})^2+\bar{\tau}(\bar{A}_{1},\bar{B}_{i})^2-2\bar{\tau}(\bar{A}_{1},\bar{A}_{i})\bar{\tau}(\bar{A}_{1},\bar{B}_i)\cosh \bar{\alpha}_i.
  \]
  Recalling now that $\triangle(\bar{A}_{1}\bar{A}_{i}\bar{B}_{i})$ is a comparison triangle for $\triangle (A_1A_iB_i)$, and dividing by $\tau(A_1,B_i)$, we have
  \begin{equation}\label{kju}
    \frac{\tau({A_i},{B_i})^2}{\tau (A_1,B_i)}=\frac{\tau({A_1},{A_i})^2}{\tau (A_1,B_i)}+{\tau}({A_1},{B_i})-2{\tau}({A_1},{A_i})\cosh {\bar{\alpha}_i}.
  \end{equation}
  Therefore, if we take the limit in (\ref{kju}) as $i$ goes to infinity, and recall the inequality $\tau(A_1,B_i)>i-1$, and the fact that $\tau(A_i,B_i), \tau(A_1,A_i)$ and $\bar{\alpha}_i$ are bounded from above, we obtain that the left hand size tends to $0$, while the right hand side diverges, a contradiction.

  \smallskip

  In summary, we have showed that, for $a$, $b$ close enough, the curves $\delta$ and $\gamma$ have the same past. For $a,b$ arbitrary, a standard compact argument using a (continuous) curve $r:[0,1]\rightarrow \Sigma$ joining both points can be applied. In fact, let $\delta_t:[0,\infty)\rightarrow X$ be the vertical line $\delta_t(s)=(r(t),s)$. Since $(r(t),0)\in I^-(\delta_t)$ and the chronological relation is open, we can take open sets $r(t)\in U_t\subset I^-(\delta_t)$. The family $\left\{ U_t \right\}_{t\in [0,1]}$ covers the curve $r(t)$, which is compact, so it can be covered by a finite sub-family, say $\left\{ U_{t_i} \right\}_{i\in \left\{ 1,\dots,n \right\}}$. Finally, consider a finite sequence $\{\bar{t}_i\}$ such that $r(\bar{t}_i)\in U_{t_i}\cap U_{t_{i+1}}$ for $i=1,\dots,n-1$, and define $\bar{t}_0=0$ and $\bar{t}_n=1$. Previous argument shows that $I^-(\delta_{\bar{t}_i})=I^-(\delta_{\bar{t}_{i+1}})$ for all $i=0,\dots,n-1$. In particular, $I^-(\delta)=I^-(\gamma)$, as required. 
  
  For the last assertion, just apply Proposition \ref{thm:main:1}.
\end{proof}

Now we are ready to establish the main result of this work, namely the low regularity version of BSC elucidated in Theorem \ref{thm:main:2}. The key ingredients being Theorem \ref{thm:splitting} and Theorem \ref{thm:main:3}.

 \medskip

\noindent {\em Proof of Theorem \ref{thm:main:2}}.
  According to Theorem \ref{thm:splitting},	
  we only need to prove the existence of a timelike line. For this purpose, let  $\left\{ x^{\pm}_n \right\}$ be two sequences where $x_n^{\pm}=(p,\pm n)$, for some $p\in \Sigma$ and all $n$. Since $X$ is globally hyperbolic, there exists a sequence of timelike maximizing geodesics $\left\{ \gamma_n \right\}$, with $\gamma_n:[a_n,b_n]\rightarrow X$, such that $\gamma_n(a_n)=x_n^-$ and $\gamma_n(b_n)=x_n^{+}$. By construction, it is clear that $L_d(\gamma_n)\rightarrow \infty$, and we can take $c_n\in [a_n,b_n]$ such that $\gamma_n(c_n)\in \Sigma\times \left\{ 0 \right\}$. Taking into account that $\Sigma$ is compact, we can apply appropriately the limit curve theorem \cite[Theorem 2.23]{BeranEtal2023} to find a subsequence $\left\{ \gamma_{n_k} \right\}$ that $C^0$-converges to a complete causal line $\gamma:\mathbb{R}\rightarrow X$. Then, we fall under the hypotheses of Theorem \ref{thm:main:3}, which guarantees that the future causal boundary is a single point. So, by applying Corollary \ref{coro:nonull}, we deduce that the causal line $\gamma$ is necessarily timelike, which concludes to proof. \qed

\end{document}